\newcommand{\cmark}{\ding{51}}
\newcommand{\xmark}{\ding{55}}
\newcommand{\iref}[1]{(\ref{#1})}
\newcommand{\be}{\begin{equation}}
\newcommand{\ee}{\end{equation}}
\newcommand\Chi{\raise .3ex\hbox{\large $\chi$}} 
\DeclareMathOperator{\argmin}{argmin}
\newtheorem{theorem}{Theorem}
\newtheorem{lemma}{Lemma}
\newtheorem{remark}{Remark}
\newtheorem{corollary}{Corollary}
\title{Optimal pointwise sampling for $L^2$ approximation}
\author{Matthieu Dolbeault\thanks{Sorbonne Universit\'e, UPMC Univ Paris 06, CNRS, UMR 7598, Laboratoire Jacques-Louis Lions, 4 place Jussieu, 75005 Paris, France (matthieu.dolbeault@sorbonne-universite.fr)} \ and Albert Cohen\thanks{Sorbonne Universit\'e, UPMC Univ Paris 06, CNRS, UMR 7598, Laboratoire Jacques-Louis Lions, 4 place Jussieu, 75005 Paris, France (albert.cohen@sorbonne-universite.fr)}}
\begin{document}
\maketitle

\begin{abstract}
Given a function $u\in L^2=L^2(D,\mu)$, where $\mu$ is a measure on a set $D$,
and a linear subspace $V_n\subset L^2$ of dimension $n$, we show that near-best approximation of $u$ in $V_n$ can be computed from
a near-optimal budget of $Cn$ pointwise evaluations of $u$, with $C>1$ a universal constant. The sampling points are drawn according to some random distribution, the approximation is computed by a weighted least-squares method, and the error is assessed in expected $L^2$ norm. This result improves on the results in \cite{CM,HNP} which require a sampling budget
that is sub-optimal by a logarithmic factor, thanks to a sparsification strategy introduced in \cite{MSS,NOU2016}. As a consequence, we obtain for any compact class $\mathcal K\subset L^2$ that
the sampling number $\rho_{Cn}^{\rm rand}(\mathcal K)_{L^2}$
in the randomized setting is dominated by the Kolmogorov $n$-width $d_n(\mathcal K)_{L^2}$. While our result shows the existence of a randomized sampling
with such near-optimal properties, we discuss remaining issues
concerning its generation by a computationally efficient algorithm.

\medskip\noindent
\emph{MSC 2020:} 41A65, 41A81, 93E24, 62E17, 94A20
\end{abstract}

\section{Introduction}

We study the approximation of a function $u\in L^2(D,\mu)$, where $\mu$ is a measure on a set $D$, by an element $\tilde u$ of $V_n$, a subspace of $L^2(D,\mu)$ of finite dimension $n$, based on pointwise data of
of $u$. Therefore, to construct $\tilde u$, we are allowed to evaluate $u$ on a sample of $m$ points $X=\{x^1,\dots,x^m\}\in D^m$. In addition, we consider randomized sampling and reconstruction, in the sense that $X$ will be drawn according to a distribution $\sigma$ over $D^m$, so the error $u-\tilde u$ should be evaluated in some probabilistic sense.
For the sake of notational simplicity, having fixed $D$ and $\mu$, we write
throughout the paper
\be
L^2:=L^2(D,\mu)\quad {\rm and}\quad \|v\|:=\|v\|_{L^2}=\left(\int_D |v|^2 d\mu\right)^{1/2},
\ee
as well as
\be
e_n(u):=\min_{v\in V_n}\|u-v\|.
\ee
One typical applicative setting is the reconstrution of multivariate functions, which corresponds to $D$ being a domain in $\mathbb R^d$.

Our main result is the following:

\begin{theorem}
\label{main theorem}
For some universal constants $C, K\geqslant1$, and for any $n$-dimensional space $V_n\subset L^2$, there exists a random sampling $X=\{x^1,\dots,x^m\}$ with $m\leqslant C n$ and a reconstruction map $R: D^m\times \mathbb C^m\mapsto V_n$, such that for any $u\in  L^2$,
\be
\mathbb E_X\left(\|u-\tilde u\|^2\right)\leqslant K e_n(u)^2
\label{mainest}
\ee
where $\tilde u:=R(x^1,\dots,x^m,u(x^1),\dots,u(x^m))$.
\end{theorem}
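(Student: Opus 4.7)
The plan is to graft a Marcus--Spielman--Srivastava sparsification onto the standard weighted least-squares scheme with Christoffel-optimal sampling. Pick an $L^2(D,\mu)$-orthonormal basis $(L_1,\dots,L_n)$ of $V_n$, set $k_n(x)=\sum_{i=1}^n|L_i(x)|^2$, and introduce the Christoffel measure $d\mu^*=\tfrac{k_n}{n}d\mu$ with associated weights $w(x)=n/k_n(x)$. Writing $\phi(x)=(L_i(x))_{i=1}^n\in\mathbb C^n$, one has the unbiasedness $\mathbb E_{x\sim\mu^*}[w(x)\phi(x)\phi(x)^*]=I_n$ together with the pointwise bound $\|\sqrt{w(x)}\phi(x)\|^2=n$. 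This last bound is what ultimately enables a linear sampling budget.

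The first step is to oversample: draw $M\asymp n\log n$ i.i.d.\ points $y^1,\dots,y^M$ from $\mu^*$, so that the matrix Chernoff inequality yields $\tfrac12 I\preceq G_M\preceq\tfrac32 I$ with high probability, where $G_M=\tfrac1M\sum_j w(y^j)\phi(y^j)\phi(y^j)^*$. The rescaled vectors $u_j:=\sqrt{w(y^j)/M}\,\phi(y^j)$ then satisfy $\sum_j u_ju_j^*\approx I$ and $\|u_j\|^2=n/M\ll1$, which is precisely the hypothesis of a Weaver-type selection theorem. The second step is to sparsify: invoking the result of \cite{MSS} via the iterated-halving procedure of \cite{NOU2016}, one extracts a subset $S\subset\{1,\dots,M\}$ of cardinality $m\leqslant Cn$ together with positive weights $\tilde w_i$ such that
\[
\alpha I\;\preceq\;\tilde G:=\sum_{i\in S}\tilde w_i\,\phi(y^i)\phi(y^i)^*\;\preceq\;\beta I
\]
for absolute constants $0<\alpha\leqslant\beta$. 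The actual random sample is $X=(y^i)_{i\in S}$, and the reconstruction is the weighted least-squares estimator $\tilde u=\argmin_{v\in V_n}\sum_{i\in S}\tilde w_i|u(y^i)-v(y^i)|^2$.

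The error analysis on the event $\tilde G\succeq\alpha I$ follows the standard scheme of \cite{CM}: writing $r:=u-P_n u$ for $P_n$ the $L^2$-orthogonal projector onto $V_n$, the stability of the weighted least-squares system yields a bound of the form
\[
\|u-\tilde u\|^2\;\leqslant\;C_1\,e_n(u)^2+C_2\sum_{i\in S}\tilde w_i\,|r(y^i)|^2,
\]
and to reach \eqref{mainest} it remains to control in expectation the discrete weighted norm of the residual~$r$.

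This last expectation is where I anticipate the main obstacle. In the purely i.i.d.\ setting of \cite{CM}, unbiasedness directly gives $\mathbb E[\sum_i\tilde w_i|r(y^i)|^2]=\|r\|^2=e_n(u)^2$, but once the weights $\tilde w_i$ depend on the full draw through MSS, this identity is lost and must be recovered indirectly. Two natural strategies are: (i) to condition the sampling distribution $\sigma$ on the successful event of Steps~1--2 and absorb the complementary failure by setting $\tilde u=0$, which is legitimate because Theorem \ref{main theorem} only requires the existence of some random sampling; or (ii) to split the budget into an MSS-sparsified part governing the conditioning of $\tilde G$ and a separate i.i.d.\ part governing the discrete norm of $r$, each costing $O(n)$ points. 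In either case the matrix Chernoff failure probability is small enough that the bad event contributes a lower-order term to the expected squared error.
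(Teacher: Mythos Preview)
You have correctly identified the central obstacle: once the subset $S$ is produced by an MSS-type selection, the weights $\tilde w_i$ depend on the whole draw $Y$, and unbiasedness of the discrete residual norm is lost. However, neither of your proposed remedies closes this gap.

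Strategy (i) does not help. Conditioning on the success of the matrix Chernoff bound and of the MSS extraction only guarantees that the Gram matrix $\tilde G$ on $V_n$ is well-framed. It says nothing about the residual $r=u-P_nu$, which is orthogonal to $V_n$. For a fixed realisation of $Y$, the deterministic subset $S=S(Y)$ may well concentrate on points where $|r|$ is large, so $\sum_{i\in S}\tilde w_i|r(y^i)|^2$ can exceed $\|r\|_Y^2$ (and hence $\|r\|^2$) by an arbitrary factor depending on $u$. Setting $\tilde u=0$ on a failure event of small probability does not address this, because the problem arises on the success event. Strategy (ii) is too vague to be a proof: the least-squares estimator is defined on a single weighted sample, and you would need the \emph{same} sample to simultaneously carry the conditioning bound (which requires MSS) and the unbiasedness bound (which you want from i.i.d.\ points). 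With only $O(n)$ i.i.d.\ points the Gram matrix is not well-conditioned, so splitting the roles does not obviously produce an estimator satisfying both hypotheses of Lemma~\ref{main lemma}.

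The idea you are missing, and which the paper uses, is to run the iterated halving of \cite{NOU2016} so as to produce not a single subset but a \emph{full partition} $J_1,\dots,J_{2^L}$ of $\{1,\dots,M\}$, each piece satisfying $|J_k|\leqslant C_0 n$ and $c_0 I\preceq \tfrac{M}{n}\sum_{i\in J_k}a_ia_i^*\preceq C_0 I$. One then selects $\kappa\in\{1,\dots,2^L\}$ \emph{at random} with probability $p_k=|J_k|/M$ and sets $X=\{y^i:i\in J_\kappa\}$, keeping the original Christoffel weights $w(y^i)$. The point is that this extra layer of randomness exactly restores unbiasedness:
\[
\mathbb E_\kappa\!\left[\frac{1}{|J_\kappa|}\sum_{i\in J_\kappa}w(y^i)|v(y^i)|^2\right]
=\sum_{k}\frac{|J_k|}{M}\cdot\frac{1}{|J_k|}\sum_{i\in J_k}w(y^i)|v(y^i)|^2
=\|v\|_Y^2,
\]
so $\mathbb E_X(\|v\|_X^2)=\mathbb E_Y(\|v\|_Y^2)\leqslant 2\|v\|^2$, while the framing on each $J_k$ gives $\lambda_{\min}(G_X)\geqslant c_0/C_0$ almost surely. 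Both hypotheses of Lemma~\ref{main lemma} are then in force and the proof concludes. The conceptual step you are lacking is precisely this randomisation over a partition, which transfers the expectation control from the oversampled $Y$ to the sparse $X$ without any loss.
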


The reconstruction map $R$ is obtained through a weighted least-squares method introduced in \cite{CM}, which has already been discussed in several papers, see \cite{AC, CD, HNP, JNZ, DH, Mig2, Mig}. The weights involved are given by the expression
\begin{equation}
w:x\in D\mapsto n\min_{v\in V_n}\frac{\|v\|^2}{|v(x)|^2}=\frac{n}{\sum_{j=1}^n |L_j(x)|^2},
\label{weight function}
\end{equation}
where the last formula holds for any $L^2$-orthonormal basis $(L_1,\dots,L_n)$ of $V_n$,
which, up to the factor $n$, is the {\it Christoffel function} associated
to the space $V_n$ and the space $L^2(D,\mu)$.
The weighted least-squares solution is then simply defined as
\be
\tilde u:= \underset{v\in V_n}{\argmin}\sum_{i=1}^m w(x^i)|u(x^i)-v(x^i)|^2.
\ee
Introducing the discrete $\ell^2$ norm
\be
\|v\|_X^2:=\frac{1}{m}\sum_{i=1}^m w(x^i)|v(x^i)|^2
\ee
and its associated scalar product $\langle\cdot,\cdot\rangle_X$, we get a computable formula for $\tilde u$:
\be
\tilde u= \underset{v\in V_n}{\argmin}\|u-v\|_X^2=P^X_{V_n}u,
\ee
where $P^X_{V_n}$ denotes the orthogonal projection on $V_n$ with respect to $\langle\cdot,\cdot\rangle_X$. Note that strictly speaking $\|\cdot\|_X$ is not a norm over $L^2$, however
the existence and uniqueness of $P^X_{V_n}$ will be ensured by the 
second condition in Lemma \ref{main lemma} below, see Remark \ref{remgram}.

Therefore our main achievements lie in the particular choice of the random sample $X$
for ensuring the near-optimal approximation and sampling budget in Theorem \ref{main theorem}.

Now, the proof of Theorem \ref{main theorem} relies on two conditions: first, the expectation of $\|\cdot\|_X^2$ has to be bounded by $\|\cdot\|^2$ up to a constant. Second, an inverse bound should hold almost surely, instead of just in expectation, for functions $v$ in $V_n$. More precisely, one has:

\begin{lemma}
\label{main lemma}
Assume that $m$ and the law $\sigma$ of $X=\{x^1,\dots,x^m\}$ are such that
\begin{equation}
\mathbb E(\|v\|_X^2)\leqslant \alpha \|v\|^2, \qquad v\in L^2,
\label{condition on expectancy}
\end{equation}
and
\begin{equation}
\|v\|^2 \leqslant \beta \|v\|_X^2\;\;\text{ a.s.},\qquad v\in V_n.
\label{condition in V_n}
\end{equation}
Then
\be
\mathbb E(\|u-\tilde u\|^2)\leqslant (1+\alpha\beta) e_n(u)^2.
\label{statement}
\ee
\end{lemma}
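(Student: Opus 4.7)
The plan is to exploit the two hypotheses in a standard weighted least-squares decomposition, pivoting around the $L^2$ best approximant $v^\star := \operatorname{argmin}_{v\in V_n}\|u-v\|$, which satisfies $\|u-v^\star\|=e_n(u)$ and $u-v^\star\perp V_n$ in the genuine $L^2$ inner product.

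First I would observe that the discrete projection $\tilde u = P^X_{V_n} u$ fixes every element of $V_n$ almost surely, using \eqref{condition in V_n} to guarantee that $\|\cdot\|_X$ is a norm on $V_n$ (hence $P^X_{V_n}$ is well defined, as mentioned in Remark \ref{remgram}). In particular $P^X_{V_n} v^\star = v^\star$, so
\[
\tilde u - v^\star = P^X_{V_n}(u-v^\star).
\]
Since $u-v^\star$ is $L^2$-orthogonal to $V_n$ and $\tilde u - v^\star\in V_n$, the true $L^2$ Pythagoras identity yields
\[
\|u-\tilde u\|^2 = \|u-v^\star\|^2 + \|v^\star - \tilde u\|^2 = e_n(u)^2 + \|v^\star-\tilde u\|^2 \quad\text{a.s.}
\]

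Next I would control the random term $\|v^\star-\tilde u\|^2$ by chaining the two assumptions. Because $v^\star-\tilde u\in V_n$, hypothesis \eqref{condition in V_n} gives $\|v^\star-\tilde u\|^2 \leqslant \beta\,\|v^\star-\tilde u\|_X^2$ almost surely. Then, since $v^\star-\tilde u = -P^X_{V_n}(u-v^\star)$ and the discrete orthogonal projection is a contraction for $\|\cdot\|_X$, one has $\|v^\star-\tilde u\|_X \leqslant \|u-v^\star\|_X$. Taking expectation and applying \eqref{condition on expectancy} to $v = u-v^\star \in L^2$, I get
\[
\mathbb E\!\left(\|v^\star-\tilde u\|^2\right) \leqslant \beta\,\mathbb E\!\left(\|u-v^\star\|_X^2\right) \leqslant \alpha\beta\,\|u-v^\star\|^2 = \alpha\beta\,e_n(u)^2.
\]

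Finally I would take expectation in the Pythagoras identity and combine it with the previous bound to obtain $\mathbb E(\|u-\tilde u\|^2)\leqslant(1+\alpha\beta)\,e_n(u)^2$, which is \eqref{statement}. There is no real obstacle here: the only delicate point is making sure that $\tilde u$ is almost surely well defined, which is precisely why condition \eqref{condition in V_n} is imposed to hold almost surely rather than merely in expectation; the rest is algebra.
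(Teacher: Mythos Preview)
Your proof is correct and essentially identical to the paper's: both pivot around the $L^2$ best approximant, apply Pythagoras in $L^2$, use \eqref{condition in V_n} on the $V_n$-component, and then bound $\|v^\star-\tilde u\|_X$ by $\|u-v^\star\|_X$ before invoking \eqref{condition on expectancy}. The only cosmetic difference is that the paper writes this last bound as the discrete Pythagoras identity $\|u_n-\tilde u\|_X^2=\|u_n-u\|_X^2-\|u-\tilde u\|_X^2$, whereas you phrase it as the contraction property of $P^X_{V_n}$ --- these are the same thing.
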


\begin{proof}
Denote $u_n$ the orthogonal projection of $u$ on $V_n$ with respect to the $L^2(D,\mu)$ norm. Applying Pythagoras theorem both for $\|\cdot\|$ and $\|\cdot\|_X$, one obtains
\begin{align*}
\mathbb E(\|u-\tilde u\|^2)&=\|u-u_n\|^2+\mathbb E(\|u_n-\tilde u\|^2)\\
&\leqslant \|u-u_n\|^2+\beta \mathbb E(\|u_n-\tilde u\|_X^2) \\
&=\|u-u_n\|^2+\beta \mathbb E(\|u_n-u\|_X^2 -\|u-\tilde u\|_X^2)\\
&\leqslant\|u-u_n\|^2+\beta \mathbb E(\|u_n-u\|_X^2)\\
&\leqslant (1+\alpha \beta)\|u-u_n\|^2,
\end{align*}
which proves \iref{statement} since $\|u-u_n\|=e_n(u)$.
\end{proof}

In section 2, we recall how both conditions \iref{condition on expectancy} and \iref{condition in V_n} can be obtained with $m$ quasi-linear in $n$, that is, of order $n\log n$. We reduce this budget to $m$ of order $n$ in section 3, by randomly subsampling the set of evaluation points, based on results from \cite{MSS,NOU2016}. The proof of Theorem \ref{main theorem} follows. We compare it to the recent results \cite{KU2, LT, NSU} in section 4, in particular regarding the domination of sampling numbers by $n$-widths. We conclude in section 5 by a discussion on the {\it offline} computational cost for practically 
generating the sample $X$.

\section{Weighted least-squares}

A first approach consists in drawing the $x^i$ independently according to the same distribution $\rho$, that is, taking $\sigma=\rho^{\otimes m}$. The natural choice for $\rho$ is $d\rho=\frac{1}{w}d\mu$, which is a probability measure since
\be
\int_D \frac{1}{w}d\mu=\frac{1}{n}\sum_{j=1}^n\int_D |L_j(x)|^2d\mu(x)=\frac{1}{n}\sum_{j=1}^n\|L_j\|^2=1.
\ee
We denote by $Z=\{x^1,\dots,x^m\}$ this first random sample and by $\|\cdot\|_Z$ the
corresponding discrete $\ell^2$ norm. With this sampling measure,
\be
\mathbb E(\|v\|_Z^2)=\frac{1}{m}\sum_{i=1}^m\int_D w(x)|v(x)|^2d\rho=\int_D |v|^2d\mu=\|v\|^2,
\ee
so condition \iref{condition on expectancy} is ensured for $X=Z$ with $\alpha=1$. To study the second condition, we introduce the Hermitian positive semi-definite Gram matrix 
\be
G_Z:=(\langle L_j,L_k\rangle_Z)_{ j, k=1,\dots, n}
\ee
and notice that \iref{condition in V_n} is equivalent to
\be
 |\nu|^2=\Big\|\sum_{j=1}^n\nu_jL_j\Big\|^2\leqslant \beta\Big\|\sum_{j=1}^n\nu_jL_j\Big\|_Z^2=\beta \nu^* G_Z\nu,\qquad \nu\in \mathbb C^n,
\ee
which in turn rewrites as $\lambda_{\min}(G_Z)\geqslant \beta^{-1}$. 

By the central limit theorem, as $m$ tends to infinity, the scalar products $\langle L_j,L_k\rangle_Z$ converge almost surely to $\langle L_j,L_k\rangle=\delta_{j,k}$, so $G_Z$ converges to the identity matrix, and
we expect that $\lambda_{\min}(G_Z)\geqslant \beta^{-1}$ holds for $\beta>1$ with high probability
as $m$ gets large. A quantitative formulation can be obtained by studying the concentration of $G_Z$ around $I$ in the matrix spectral norm
$$
\|M\|_2:=\max\{|Mx| \,: \, |x|=1\}.
$$
This is based on the matrix Chernoff bound, see \cite{AW, Tr} for the original inequality and \cite{CD}, Lemma 2.1, for its application to our problem:

\begin{lemma}
\label{chernoff}
For $m\geqslant 10n\ln (\frac{2n}\varepsilon)$, if $X\sim \rho^{\otimes m}$, then
\be
\mathbb P\left(\|G_Z-I\|_2\leqslant\frac{1}{2}\right)\geqslant 1-\varepsilon.
\ee
In particular, $\mathbb P\left(\lambda_{\min}(G_Z)\geqslant\frac{1}{2}\right)\geqslant 1-\varepsilon$.
\end{lemma}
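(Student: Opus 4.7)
My plan is to apply the matrix Chernoff inequality directly, following the template of Lemma~2.1 in \cite{CD}. First I would rewrite $G_Z$ as an empirical mean of i.i.d.\ rank-one random matrices. Setting $\phi(x) := (L_1(x),\dots,L_n(x))^T \in \mathbb{C}^n$ and $Y_i := w(x^i)\phi(x^i)\phi(x^i)^*$, the entries $\langle L_j,L_k\rangle_Z = \frac{1}{m}\sum_i w(x^i)L_j(x^i)\overline{L_k(x^i)}$ give
\[
G_Z \;=\; \frac{1}{m}\sum_{i=1}^m Y_i,
\]
so that $G_Z$ is an average of i.i.d.\ Hermitian positive semi-definite rank-one matrices.

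Next I would verify the two hypotheses needed for matrix Chernoff. Using the change of measure $d\mu = w\, d\rho$, the expectation is
\[
\mathbb{E}(Y_i) \;=\; \int_D w(x)\phi(x)\phi(x)^*\,d\rho(x) \;=\; \int_D \phi(x)\phi(x)^*\,d\mu(x) \;=\; I,
\]
the last equality because the $L_j$ are $L^2$-orthonormal. For the almost-sure spectral bound, the defining identity $\sum_{j=1}^n |L_j(x)|^2 = n/w(x)$ from \iref{weight function} yields
\[
\|Y_i\|_2 \;=\; w(x^i)\,|\phi(x^i)|^2 \;=\; n \quad\text{almost surely.}
\]

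Finally I would invoke the matrix Chernoff bound in the form of Tropp \cite{Tr}: for i.i.d.\ p.s.d.\ matrices $Y_i$ with $\mathbb{E}(Y_i) = I$ and $\|Y_i\|_2 \leqslant R$ a.s., the extreme eigenvalues of $\bar Y := \frac{1}{m}\sum_i Y_i$ satisfy two-sided subexponential tails of the form $n\exp(-cm/R)$ at deviation $1/2$. A union bound over the two tails controls $\|G_Z - I\|_2 = \max(\lambda_{\max}(G_Z)-1,\,1-\lambda_{\min}(G_Z))$; with $R = n$ and $m\geqslant 10n\ln(2n/\varepsilon)$ the failure probability falls below $\varepsilon$, and the companion assertion on $\lambda_{\min}(G_Z)$ follows at once by Weyl's inequality. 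The only point requiring care is calibrating the constant $10$, which depends on the precise version of Chernoff one invokes; this is bookkeeping rather than a conceptual obstacle, and no new idea beyond \cite{CD} is needed.
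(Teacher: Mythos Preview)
Your proposal is correct and matches the paper's approach exactly: the paper does not give an independent proof of this lemma but simply cites the matrix Chernoff bound \cite{AW,Tr} and its application in \cite{CD}, Lemma~2.1, which is precisely the argument you outline. The decomposition $G_Z=\frac1m\sum_i Y_i$, the verification $\mathbb E(Y_i)=I$ and $\|Y_i\|_2=n$, and the concluding appeal to Tropp's tail bound are all as in \cite{CD}; the constant $10$ arises from the upper-tail exponent at $\delta=\tfrac12$, as you note.
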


Thus assumption \iref{condition in V_n} is satisfied with $\beta=2$, but only with probability $1-\varepsilon$. As we would like it to hold almost surely, we condition the sampling to the event
\be
E:=\left\{\|G_Z-I\|_2\leqslant\frac{1}{2}\right\}
\ee
which defines a new sample
\be
Y=Z|E.
\ee
In practice, $Y$ can be obtained through a rejection method, which consists in
drawing successively sets of points $Z^1,Z^2,\dots$ according to $\rho^{\otimes m}$, 
and defining $Y=Z^k$ for the first value $k$ such that $E$ holds. 
We then define $\tilde u$ as the weighted least-square estimator based on this 
conditioned sample, that is
\be
\tilde u:=P_{V_n}^Y u.
\ee
This approach was introduced
and analyzed in \cite{HNP}, see in particular Theorem 3.6 therein.
A simpler version of this result, sufficient for our purposes, is the following:

\begin{lemma}
\label{boosted}
For $m\geqslant 10n\ln (4n)$, if $Z\sim\rho^{\otimes m}$ and $Y=Z|E$, then
\be
\|G_Y-I\|_2\leqslant\frac{1}{2},
\ee
and
 \be
 \mathbb E_Y(\|u-\tilde u\|^2)\leqslant 5e_n(u)^2.
 \ee
\end{lemma}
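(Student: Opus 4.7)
The plan is to observe that the first bound is immediate from the definition of the conditioning, and then to apply Lemma \ref{main lemma} with well-chosen constants $\alpha$ and $\beta$ obtained by transferring the expectation and spectral bounds from $Z$ to the conditioned sample $Y$.

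First, since $Y=Z|E$, the sample $Y$ lives on the event $E=\{\|G_Z-I\|_2\leqslant \tfrac12\}$ by construction, so the inequality $\|G_Y-I\|_2\leqslant \tfrac12$ is automatic. This spectral bound immediately yields $\lambda_{\min}(G_Y)\geqslant \tfrac12$, which, as explained in Section 2, is equivalent to $\|v\|^2\leqslant 2\|v\|_Y^2$ almost surely for every $v\in V_n$. Hence condition \iref{condition in V_n} of Lemma \ref{main lemma} holds with $\beta=2$.

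Next I would verify the expectation condition \iref{condition on expectancy} for $Y$. Applying Lemma \ref{chernoff} with $\varepsilon=\tfrac12$, the hypothesis $m\geqslant 10n\ln(4n)=10n\ln(2n/\varepsilon)$ gives $\mathbb P(E)\geqslant \tfrac12$. Combined with the identity $\mathbb E(\|v\|_Z^2)=\|v\|^2$ established in Section 2 and the nonnegativity of $\|v\|_Z^2$, the elementary bound
\be
\mathbb E_Y(\|v\|_Y^2)=\mathbb E(\|v\|_Z^2\mid E)\leqslant \frac{\mathbb E(\|v\|_Z^2)}{\mathbb P(E)}\leqslant 2\|v\|^2
\ee
shows that \iref{condition on expectancy} holds with $\alpha=2$.

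With both hypotheses verified, Lemma \ref{main lemma} applied to the sample $Y$ yields $\mathbb E_Y(\|u-\tilde u\|^2)\leqslant (1+\alpha\beta)\,e_n(u)^2=5\,e_n(u)^2$, which is the claim. There is no real obstacle here: the only subtlety is the transfer of the expectation bound through the conditioning, which loses a factor $1/\mathbb P(E)\leqslant 2$ and forces the choice of $m$ large enough (via $\varepsilon=\tfrac12$) to keep $\mathbb P(E)$ bounded below by a constant, so that $\alpha$ remains an absolute constant.
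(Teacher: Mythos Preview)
Your proof is correct and follows essentially the same approach as the paper: the first bound is immediate from the definition of $Y$, giving $\beta=2$, and the expectation bound is obtained by the same conditioning inequality $\mathbb E(\|v\|_Z^2\mid E)\leqslant \mathbb E(\|v\|_Z^2)/\mathbb P(E)$ together with Lemma~\ref{chernoff} at $\varepsilon=\tfrac12$, yielding $\alpha=2$ and hence the constant $1+\alpha\beta=5$ via Lemma~\ref{main lemma}.
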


\begin{proof}
The first part immediately results from the definition of $Y$ and $E$, and implies condition \iref{condition in V_n} with $\beta=2$. Moreover, $\mathbb P(E)\geqslant\frac{1}{2}$ by Lemma \ref{chernoff}
with $\varepsilon=\frac 1 2$, so for any $v\in L^2(D,\mu)$,
\be
\mathbb E_Y(\|v\|_Y^2)=\mathbb E_Z(\|v\|_Z^2|E)=\frac{\mathbb E_Z(\|v\|_Z^2\Chi_E)}{\mathbb P(E)}\leqslant \frac{\mathbb E_Z(\|v\|_Z^2)}{\mathbb P(E)}\leqslant 2\|v\|^2,
\ee
so condition \iref{condition on expectancy} holds with $\alpha=2$. The conclusion follows from Lemma \ref{main lemma}.
\end{proof}

\begin{remark}
\label{remark redraws}
The number of redraws $k$ for reaching $Y$ follows a geometric law of expectation
$\mathbb E(k)=\mathbb P(E)^{-1}=(1-\varepsilon)^{-1}$, that is $\mathbb E(k)\leqslant 2$ for the particular choice of $m$ in the above lemma.
It should be well noted that $u$ is not evaluated at the 
intermediately generated samples $Z^1,\dots,Z^{k-1}$, which thus enter the offline cost
of the sampling algorithm.
\end{remark}

\begin{remark}
\label{remgram}
The fact that the Gramian $G_Y$ is non-singular implies that for any $u$ with given
values $y^i$ at the points $x^i$, we can uniquely define 
\be
\tilde u=P^Y_{V_n}u=\sum_{j=1}^n a_jL_j,
\ee
since $a=(a_1,\dots,a_n)^*$ solves the system of normal equations
\be
G_Y a=b, 
\ee
where the right-side vector has coordinates
\be
b_j=\langle L_j,u\rangle_Y=\frac 1 m\sum_{i=1}^m w(x^i)L_j(x^i)\overline{y^i}.
\ee
If $u$ is 
in $L^2$, the $y^i$ are only defined up to a representer, however since two representers
$u^1$ and $u^2$ coincide $\mu$-almost surely, we find that $P^Y_{V_n}u$ is 
well defined almost surely over the draw of $Y$.
\end{remark}

\section{Random subsampling}

With Lemma \ref{boosted}, we already have an error bound similar to that of Theorem \ref{main theorem}. However, the sampling budget is larger than $n$ by a logarithmic factor, which we seek to remove in this section. To do so, we partition the sample $Y$ into subsets of size comparable to $n$, and randomly pick one of these subsets to define the new sample. An appropriate choice of the partitioning is needed to circumvent the main obstacle, namely the preservation of condition \iref{condition in V_n}. It relies on the following lemma, taken from Corollary B of \cite{NOU2016}, itself a consequence of Corollary 1.5 in \cite{MSS}. The relevance of these two results to sampling problems 
were exploited in \cite{NSU} and noticed in \cite{KUV}, respectively.

\begin{lemma}
\label{MSS}
Let $a_1,\dots,a_m\in\mathbb C^n$ be vectors of norm $|a_i|^2\leqslant \delta$ for $i=1,\dots,m$, satisfying
\be
\alpha I\leqslant \sum_{i=1}^m a_ia_i^*\leqslant \beta I
\ee
for some constants $\delta < \alpha \leqslant \beta$. Then there exists a partition of $\{1,\dots,m\}$ into two sets $S_1$ and $S_2$ such that
\be
\frac{1-5\sqrt{\delta/\alpha}}{2}\alpha I\leqslant \sum_{i\in S_j}a_ia_i^*\leqslant \frac{1+5\sqrt{\delta/\alpha}}{2}\beta I,\qquad j=1,2.
\ee
\end{lemma}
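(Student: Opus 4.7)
I would derive this from the method of interlacing polynomials of Marcus--Spielman--Srivastava, following the strategy of \cite{MSS, NOU2016}. The plan has three steps.

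First, parametrize the partition by a vector $s\in\{0,1\}^m$: setting $S_1:=\{i:s_i=1\}$, $S_2:=\{i:s_i=0\}$, and $M(s):=\sum_i s_i a_ia_i^*$, one has $\sum_{i\in S_1}a_ia_i^*=M(s)$ and $\sum_{i\in S_2}a_ia_i^*=A-M(s)$, where $A:=\sum_i a_ia_i^*$ satisfies $\alpha I\leqslant A\leqslant\beta I$ by hypothesis. The lemma then reduces to producing some $s^\star$ for which $M(s^\star)$ is close to $\tfrac{1}{2}A$ in the L\"owner order, with deviation of order $\sqrt{\delta/\alpha}\cdot\max(\alpha,\beta)$.

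Second, I would produce such an $s^\star$ via the interlacing family framework. Let $s$ be drawn uniformly on $\{0,1\}^m$ and consider the characteristic polynomials $p_s(x):=\det(xI-M(s))$. Because each coordinate $s_i$ controls a rank-one positive semi-definite increment $a_ia_i^*$, the recursive conditioning argument of MSS shows that $\{p_s\}_{s\in\{0,1\}^m}$ is an interlacing family. Their main theorem then guarantees the existence of $s^\star$ whose largest eigenvalue $\lambda_{\max}(M(s^\star))$ is at most the largest root of the expected polynomial $\bar p(x):=\mathbb E_s[p_s(x)]$. A symmetric application (swapping $s\leftrightarrow\mathbf{1}-s$) controls $A-M(s^\star)$ from above, which yields the matching lower bound on $\lambda_{\min}(M(s^\star))$.

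Third, I would estimate the extremal roots of $\bar p$. This polynomial admits an explicit expression as a mixed characteristic polynomial of the rank-one matrices $a_ia_i^*$, and its extremal roots can be bounded by the multivariate barrier method of MSS. Carefully tracking the dependence of the barrier shift on the parameters $\delta$, $\alpha$, $\beta$ produces the factor $5\sqrt{\delta/\alpha}$ of the statement. This barrier analysis is the deepest technical ingredient and is the main obstacle; rather than reprove it I would invoke the sharpened form derived in \cite{MSS, NOU2016} as a black box, which is precisely the content of the corollary being cited.
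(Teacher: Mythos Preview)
The paper does not prove this lemma at all: it is quoted verbatim as Corollary~B of \cite{NOU2016}, itself derived from Corollary~1.5 of \cite{MSS}, and used as a black box. Your final sentence does exactly the same thing, so at the level of what is actually being claimed you and the paper agree.

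That said, the sketch you give in step~2 has a genuine gap. The interlacing-family argument applied to $p_s(x)=\det(xI-M(s))$ produces \emph{one} assignment $s^\star$ with $\lambda_{\max}(M(s^\star))$ below the largest root of $\bar p$. Your ``symmetric application (swapping $s\leftrightarrow\mathbf{1}-s$)'' produces a possibly \emph{different} assignment $s^{\star\star}$ with $\lambda_{\max}(A-M(s^{\star\star}))$ controlled; there is no reason these two assignments coincide, so you have not yet exhibited a single partition satisfying both the upper and lower inequalities. The actual MSS argument avoids this by a dimension-doubling trick: one takes independent random vectors $w_i\in\mathbb C^{2n}$ equal to $(a_i,0)$ or $(0,a_i)$ with probability $\tfrac12$ each, and applies the interlacing/barrier machinery to the single family $\det(xI-\sum_i w_iw_i^*)$. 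The resulting matrix is block-diagonal with blocks $\sum_{i\in S_1}a_ia_i^*$ and $\sum_{i\in S_2}a_ia_i^*$, so a single upper bound on its operator norm controls both blocks simultaneously, and the complementary lower bounds then follow from $\sum_{i\in S_1}+\sum_{i\in S_2}=A$. If you want to keep the sketch, replace the symmetry sentence by this doubling construction; otherwise, simply citing \cite{NOU2016} as the paper does is sufficient.
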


In Lemma 2 of \cite{NOU2016} this result is applied inductively in order to find 
a smaller set $J\subset \{1,\dots,m\}$ of cardinality $|J|\leqslant cn$ such that 
\be
C^{-1} I\leqslant \frac m n\sum_{i\in J}a_ia_i^*\leqslant C I,
\ee
for some universal constants $c,C>1$. We adapt this approach in order to obtain
a complete partition of $\{1,\dots,m\}$ by sets having such properties.

\begin{lemma}
\label{induction}
Let $a_1,\dots,a_m\in\mathbb C^n$ be vectors of norm $|a_i|^2=\frac n m$ for $i=1,\dots,m$, satisfying
\be
\frac{1}{2} I\leqslant\sum_{i=1}^m a_ia_i^*\leqslant \frac{3}{2} I.
\ee
Then there exists an integer $L$ and a partition of $\{1,\dots,m\}$ into $2^L$ sets $J_1,\dots,J_{2^L}$ such that
\be
c_0 I\leqslant \frac{m}{n}\sum_{i\in J_k}a_ia_i^*\leqslant C_0 I,\qquad 1\leqslant k \leqslant 2^L,
\label{stabJ}
\ee
with universal constants $c_0$ and $C_0$. In addition, each set $J_k$ satisfies
\be
|J_k|\leqslant C_0 n.
\label{cardJ}
\ee
\end{lemma}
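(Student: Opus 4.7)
The plan is to apply Lemma \ref{MSS} recursively, splitting every current subset into two at each level $\ell$. Starting from $J = \{1,\dots,m\}$, for which we can take $\alpha_0 = 1/2$ and $\beta_0 = 3/2$, and with $\delta = n/m$ unchanged throughout, the lemma produces on each resulting subset $J'$ bounds $\alpha_{\ell+1} I \leq \sum_{i \in J'} a_i a_i^* \leq \beta_{\ell+1} I$, where $\alpha_{\ell+1} := \frac{1 - 5\sqrt{r_\ell}}{2}\alpha_\ell$, $\beta_{\ell+1} := \frac{1 + 5\sqrt{r_\ell}}{2}\beta_\ell$, and $r_\ell := \delta/\alpha_\ell$. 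Since $r_{\ell+1} = 2r_\ell/(1 - 5\sqrt{r_\ell}) \geq 2 r_\ell$, the parameter $r_\ell$ at least doubles at each iteration.

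Fix a small universal threshold $\varepsilon \in (0,1/25)$ so that $1 - 5\sqrt{\varepsilon} \geq 1/2$, and let $L$ be the largest integer for which $r_L \leq \varepsilon$. Then $r_L \geq c_1$ for some universal $c_1 > 0$ (otherwise $r_{L+1} \leq 2r_L/(1 - 5\sqrt{r_L}) \leq \varepsilon$, contradicting maximality of $L$), while $r_\ell \leq 2^{\ell - L}\varepsilon$ for all $\ell \leq L$ by the doubling property. Hence $\alpha_L = \delta/r_L$ is comparable to $n/m$ up to universal constants. For the upper bound, writing $\beta_L = \beta_0 \cdot 2^{-L}\prod_{\ell=0}^{L-1}(1 + 5\sqrt{r_\ell})$, the sum $\sum_{\ell=0}^{L-1}\sqrt{r_\ell}$ is dominated by the geometric series $\sqrt{\varepsilon}\sum_{\ell}(1/\sqrt 2)^{L-\ell}$, whose value is a universal constant; thus $\prod(1 + 5\sqrt{r_\ell})$ is bounded, and combining with $2^L \asymp m/n$ (which follows from $r_L \asymp 2^L\delta$) yields $\beta_L \lesssim n/m$.

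After multiplying through by $m/n$, this produces a partition $J_1, \ldots, J_{2^L}$ of $\{1,\dots,m\}$ satisfying \iref{stabJ} with universal $c_0, C_0$. The cardinality bound \iref{cardJ} then follows by taking the trace of \iref{stabJ}: since $|a_i|^2 = n/m$,
\[
|J_k| = \frac{m}{n}\sum_{i \in J_k}|a_i|^2 = \mathrm{tr}\Bigl(\frac{m}{n}\sum_{i\in J_k} a_i a_i^*\Bigr) \leq C_0 n.
\]

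The main obstacle is keeping the multiplicative perturbations $(1 \pm 5\sqrt{r_\ell})/2$ under control through the telescoping products over all $L$ levels. The key observation that makes this work is that since $r_\ell$ roughly doubles along the iteration, $\sqrt{r_\ell}$ grows only by a factor $\sqrt 2$, so its cumulative contribution $\sum_{\ell} \sqrt{r_\ell}$ forms a convergent geometric series bounded by a universal constant multiple of $\sqrt{\varepsilon}$, which can be made as small as needed by choosing $\varepsilon$ small.
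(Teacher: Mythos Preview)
Your proof is correct and follows essentially the same inductive splitting strategy as the paper: the recursions for $\alpha_\ell,\beta_\ell$ are identical (you reparametrize via $r_\ell=\delta/\alpha_\ell$), the stopping rule is equivalent, and the geometric-series control of $\sum_\ell\sqrt{r_\ell}$ is the same device the paper uses to bound $\prod(1\pm 5\sqrt{\delta/\alpha_\ell})$. The only omission is the trivial base case $r_0>\varepsilon$ (i.e.\ $m\lesssim n$), where one simply takes $L=0$ and does not split; the paper treats this separately by first disposing of $n/m\geqslant 1/200$.
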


\begin{proof}
The cardinality estimate \iref{cardJ} follows from the upper inequality in \iref{stabJ} by taking
the trace 
\be
nC_0={\rm tr}(C_0I) \geqslant \frac{m}{n}\sum_{i\in J_k}{\rm tr}(a_ia_i^*)
=\frac{m}{n}|J_k|\frac{n}{m}=|J_k|.
\ee
For the proof of \iref{stabJ}, if $n/m \geqslant 1/200$, then the result holds with $L=0$, $J_1 = \{1,\dots,m\}$, $c_0=1/2$ and $C_0 = 300$. Now assuming $\delta := n/m < 1/200$, define by induction $\alpha_0=\frac{1}{2}$, $\beta_0=\frac{3}{2}$, and
\be
\alpha_{\ell+1} :=\alpha_\ell \frac{1-5\sqrt{ \delta/\alpha_\ell}}2, \qquad \beta_{\ell+1} :=\beta_\ell \frac{1+5\sqrt{ \delta/\alpha_\ell}}2, \qquad \ell\geqslant0.
\ee
As $\alpha_{\ell+1}\leqslant \frac{\alpha_\ell}{2}$, the minimal integer $L$ such that $\alpha_L\leqslant 100\delta$ is well defined, and satisfies
\be
\alpha_L=\alpha_{L-1} \frac{1-5\sqrt{ \delta/\alpha_{L-1}}}2> 100\delta \frac{1-5\sqrt{ 1/100}}2=25\delta.
\ee 
Moreover $\alpha_\ell\geqslant2^{L-\ell-1}\alpha_{L-1}\geqslant 2^{L-\ell-1}\,100\delta$ for $\ell=0,\dots,L-1$,
so
\be
\beta_L=3\alpha_L\prod_{\ell=0}^{L-1}\frac{1+5\sqrt{\delta/\alpha_\ell}}{1-5\sqrt{\delta/\alpha_\ell}}\leqslant C\delta,
\ee
with $C:=300\prod_{\ell\geqslant2}\frac{1+\sqrt 2^{-\ell}}{1-\sqrt 2^{-\ell}}$.

Finally, we inductively define partitions $\{S^\ell_1,\dots,S^{\ell}_{2^\ell}\}$ for $0\leqslant\ell\leqslant L$:
start with $S^0_1=\{1,\dots,m\}$ and for any $\ell,j$, noticing that
\be
\alpha_\ell I\leqslant \sum_{i\in S^\ell_j}a_ia_i^*\leqslant \beta_\ell I,
\ee
apply Lemma \ref{MSS} to split $S^\ell_j$ into subsets $S^{\ell+1}_{2j-1}$ and $S^{\ell+1}_{2j}$ satisfying the same property. 
At the last step, we define
\be
J_k=S^{L}_k.
\ee
The framing \iref{stabJ} thus holds with $c_0=\alpha_L/\delta\geqslant 25$ and $C_0=\beta_L/\delta \leqslant 11000$.
\end{proof}
\noindent
{\bf Proof of Theorem \ref{main theorem}:}
Define
\be
a_i=\left(\sqrt{\frac{w(x^i)}{m}}L_j(x^i)\right)_{j=1,\dots,n}
\ee
the normalised random vectors corresponding to the 
sample $Y=\{x^1,\dots,x^m\}$ introduced in the previous section. As
\be
\frac{1}{2}I\leqslant G_Y=\sum_{i=1}^ma_ia_i^*\leqslant \frac 3 2 I
\ee
and
\be
|a_i|^2=\frac 1 m w(x^i)\sum_{j=1}^n|L_j(x^i)|^2= \frac n m
\ee
thanks to the choice of weights \iref{weight function},
the assumptions of Lemma \ref{induction} are satisfied. Applying this lemma,
we obtain sets $J_1,\dots,J_{2^L}$ partitioning $\{1,\dots,m\}$. Let $\kappa$ be a random variable
taking value $k\in \{1,\dots,2^L\}$ with probability $p_k=|J_k|/m$, 
and create a random subsampling $X$ of $Y$ through
\be
X=\{x^i\in Y \, : \, i\in J_\kappa \}.
\ee
Then the budget condition $|X|=|J_\kappa|\leqslant C_0n$ is satisfied according to \iref{cardJ}.
Here, we define the discrete norm as
\be
\|v\|_X^2:=\frac{1}{|X|}\sum_{i\in J_\kappa}w(x^i)|v(x^i)|^2,
\ee
and the associated Gram matrix
\be
G_X:=(\langle L_j,L_k\rangle_X)_{j,k=1,\dots,n}=\frac m{|J_\kappa|}\sum_{i\in J_\kappa}a_ia_i^*.
\ee
The weighted least-squares estimate is now defined as
\be
\tilde u:= \underset{v\in V_n}{\argmin} \frac{1}{|X|}\sum_{i\in J_\kappa}w(x^i)|u(x^i)-v(x^i)|^2,
\ee
and it thus depends on the random draws of both $Y$ and $\kappa$.
Condition \iref{condition in V_n} follows from the lower inequality in \iref{stabJ} with $\beta=\frac{C_0}{c_0}$ since
\be
G_X\geqslant \frac m{|J_\kappa|}\frac n mc_0I\geqslant \frac{c_0}{C_0}I.
\ee
Finally, we have for any $v\in L^2(D,\mu)$
\be
\mathbb E_X(\|v\|_X^2)=\mathbb E_Y\left(\sum_{k=1}^{2^L}\frac{p_k}{|J_k|}\sum_{i\in J_k}w(x^i)|v(x^i)|^2\right)=\mathbb E_Y(\|v\|_Y^2)\leqslant 2e_n(u)^2,
\ee
so condition \iref{condition on expectancy} holds with $\alpha=2$. Applying Lemma \ref{main lemma},
we conclude that \iref{mainest} holds with $C=C_0$ and $K=1+2\frac {C_0}{c_0}$.
\hfill $\Box$

\section{Comparison with related results}

In order to compare Theorem \ref{main theorem} with several recent results \cite{KUV, KU, NSU, T2}, we 
consider its implication when the target function $u$ belongs to a certain class
of functions $\mathcal K$ that describes some prior information on $u$, such as smoothness.

Recall that if $V$ is a Banach space of functions defined on $D$ and $\mathcal K\subset V$ 
is a compact set, its {\it Kolmogorov $n$-width} is defined by
\be
d_n(\mathcal K)_{V}:=\inf_{\dim V_n=n}\sup_{u\in \mathcal K}\inf_{v\in V_n}\|u-v\|_{V},
\ee
where the first infimum is taken over all linear spaces $V_n\subset V$ of dimension $n$.
This quantity thus describes the best approximation error that can be achieved uniformly over
the class $\mathcal K$ by an $n$-dimensional linear space.

On the other hand, building a best approximation of $u$ requires in principle full knowledge
on $u$, and we want to consider the situation where we only have access to a limited number
of point evaluations. This leads one to consider the {\it sampling numbers}, also called {\it optimal recovery numbers}, both in 
the deterministic and randomized settings. 

For deterministic samplings, we define the (linear) sampling numbers
\be
\rho_m^{\det}(\mathcal K)_{V}:=\inf_{X, R_X}\max_{u\in \mathcal K}\|u-R_X(u(x^1),\dots,u(x^m))\|_{V},
\ee
where the infimum is taken over all samples $X=\{x^1,\dots,x^m\}\in D^m$
and linear reconstruction maps $R_X:\mathbb C^m\to V$. For random samplings, we may
define similar quantities by
\be
\rho_m^{\rm rand}(\mathcal K)_{V}^2:=\inf_{X,R_X}\max_{u\in \mathcal K}
\mathbb E\left(\|u-R_X(u(x^1),\dots,u(x^m))\|_{V}^2\right),
\ee
where the infimum is taken over all random variables $X=\{x^1,\dots,x^m\}\in D^m$
and linear reconstruction maps $R_X:\mathbb C^m\to V$. Note that a deterministic sample
can be viewed as a particular choice of random sample following a Dirac distribution in $D^m$,
and therefore
\be
\rho_m^{\rm rand}(\mathcal K)_{V}\leqslant \rho_m^{\rm det}(\mathcal K)_{V}.
\ee
Sampling numbers may also be defined without imposing the linearity of $R_X$,
leading to smaller quantities. In what follows, we shall establish upper bounds on the linear sampling numbers, which in turn are upper bounds for the nonlinear ones. We refer to \cite{NW1} for an introduction and study of sampling numbers in the context of general linear measurements, and to \cite{NW3} that focuses on point evaluation,
also termed as {\it standard information}.

By optimizing the choice of the space $V_n$ used in
Theorem \ref{main theorem}, we obtain as a consequence 
that, for $V=L^2=L^2(D,\mu)$, the sampling numbers in the randomized setting
are dominated by the Kolmogorov $n$-widths.

\begin{corollary}
For any compact set $\mathcal K\subset L^2$, one has
\be
\rho_{Cn}^{\rm rand}(\mathcal K)_{L^2}\leqslant K d_n(\mathcal K)_{L^2},
\label{randomized result}
\ee
where $C$ and $K$ are the same constants as in Theorem \ref{main theorem}.
\end{corollary}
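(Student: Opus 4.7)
The plan is to apply Theorem \ref{main theorem} to an $n$-dimensional subspace that nearly realizes the Kolmogorov $n$-width of $\mathcal K$, and then take the supremum over $\mathcal K$.

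Concretely, I would first fix $\varepsilon > 0$ and use the definition of $d_n(\mathcal K)_{L^2}$ to choose an $n$-dimensional subspace $V_n \subset L^2$ such that
\[
\sup_{u \in \mathcal K} e_n(u) \leqslant d_n(\mathcal K)_{L^2} + \varepsilon;
\]
the slack $\varepsilon$ is needed because the infimum defining $d_n$ is not in general attained. Second, I would apply Theorem \ref{main theorem} to this particular $V_n$ to obtain a random sample $X$ with $|X| \leqslant Cn$ and a reconstruction map $R$ such that $\mathbb E_X(\|u-\tilde u\|^2) \leqslant K e_n(u)^2$ for every $u \in L^2$, hence in particular for every $u \in \mathcal K$.

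Before passing to the supremum over $\mathcal K$, I need to verify that the pair $(X,R)$ is admissible in the definition of $\rho_m^{\rm rand}(\mathcal K)_{L^2}$, which demands that $R_X$ be linear in the data for each realization of $X$. This is transparent from Remark \ref{remgram}: once $X$ is frozen, $\tilde u = \sum_{j=1}^n a_j L_j$ where the coefficient vector $a$ solves the normal equation $G_X a = b$, and $b$ depends linearly on $(u(x^1),\dots,u(x^m))$. Taking the supremum over $u \in \mathcal K$ and then the infimum over admissible $(X,R)$ then gives
\[
\rho_{Cn}^{\rm rand}(\mathcal K)_{L^2}^2 \leqslant K \sup_{u \in \mathcal K} e_n(u)^2 \leqslant K \bigl(d_n(\mathcal K)_{L^2} + \varepsilon\bigr)^2.
\]
Letting $\varepsilon \to 0$ and using $\sqrt{K} \leqslant K$ (valid since $K \geqslant 1$) would yield \iref{randomized result}.

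The argument is essentially a repackaging of Theorem \ref{main theorem}, so no genuine obstacle is expected; the only small points of care are the $\varepsilon$-perturbation in the choice of $V_n$ (since the infimum in the definition of $d_n$ need not be attained) and the observation that the weighted least-squares reconstruction, for each fixed realization of the sample, is a linear function of the observed values.
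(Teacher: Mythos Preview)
Your proposal is correct and is precisely the argument the paper has in mind: the paper does not spell out a proof of the corollary but simply says it follows ``by optimizing the choice of the space $V_n$ used in Theorem \ref{main theorem}'', which is exactly what you do. Your attention to the $\varepsilon$-slack and to the linearity of the reconstruction map (via Remark \ref{remgram}) fills in details the paper leaves implicit.
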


\begin{remark}
The bound \iref{randomized result} cannot be attained with independent and identically distributed sampling points $x^1,\dots,x^m$. Indeed, consider the simple example, already evoked in \cite{Tr}, where $D=[0,1]$, $\mu$ is the Lebesgue measure,
\be
V_n=\left\{\sum_{i=1}^{n}a_i\chi_{\left[\frac {i-1} n, \frac {i} n\right[}, (a_1,\dots,a_n)\in \mathbb C^n\right\}
\ee
is a space of piecewise constant functions, and $\mathcal K=\{u\in V_n, \|u\|_{L^\infty}\leqslant1\}$. Then $\mathcal K\subset V_n$ so $d_n(\mathcal K)_{L^2}=0$, and an exact reconstruction $R_Xu=u$ is possible if and only if $X$ contains at least one point in each interval $\left[\frac {i-1} n, \frac {i} n\right[$. Thus $\rho_n^{\rm det}(\mathcal K)_{L^2}=0$, but in the case of i.i.d measurements, $m$ has to grow like $n\log n$ to ensure this constraint, due to the coupon collector's problem.
\end{remark}

\begin{remark}
In \cite{K}, a result similar to Theorem \ref{main theorem} is obtained
under the extra assumption of a uniform bound on $e_n(u)/e_{2n}(u)$,
yielding the validity of \iref{randomized result}
assuming a uniform bound on $d_n(\mathcal K)_{L^2}/d_{2n}(\mathcal K)_{L^2}$.
The recovery method used in \cite{K} is not of least-square type, but 
rather an elaboration of the pseudo-spectral approach that would simply approximate the 
inner products $\langle u,L_j\rangle=\int_D uL_jd\mu$ by a quadrature, using a hierarchical approach introduced in \cite{WW}.
\end{remark}

Ideally, one would like a ``worst case'' or ``uniform'' version of Theorem \ref{main theorem}, in the form
\be
\rho_{Cn}^{\det}(\mathcal K)_{L^2}\leqslant K d_n(\mathcal K)_{L^2},
\label{ideal}
\ee
but it is easily seen that such an estimate cannot be expected for general compact sets of
$L^2$, due to the fact that pointwise evaluations are not continuous in $L^2$ norm.

It is however possible to recover such uniform estimates by mitigating 
the non-achievable estimate \iref{ideal} in various ways.
One first approach, developed in \cite{LT, T2}, gives an inequality similar to \iref{ideal}, with $d_n(\mathcal K)_{L^2}$ replaced by $d_n(\mathcal K)_{L^\infty}$.
It is based on the following lemma, see Theorem 2.1 in \cite{T2}, which we recall for comparison with our Lemma \ref{main lemma}:
\begin{lemma}
Assume that $\mu$ is a finite measure of mass $\mu(D)=M<\infty$, that the constant functions belong to $V_n$, and that there exists a sample $X=\{x^1,\dots,x^m\}$ and weights $w^i$ such that the discrete norm
\be
\|v\|_X^2=\frac 1 {|X|}\sum_{i=1}^m w_i |v(x^i)|^2
\ee
satisfies a framing
\be
\beta^{-1}\|v\|^2\leqslant \|v\|_X^2 \leqslant \alpha\|v\|^2, \quad v\in V_n.
\label{temlyakov framing}
\ee
Then 
\be
\|u-P^X_{V_n}u\|\leqslant \sqrt M \left(1+\sqrt{\alpha\beta}\right)e_n(u)_{L^\infty},
\ee
where $e_n(u)_{L^\infty}=\min_{v\in V_n}\|u-v\|_{L^\infty}$.
\end{lemma}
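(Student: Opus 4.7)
The plan is to bound $\|u-P^X_{V_n}u\|$ by comparison with an arbitrary element $v_n \in V_n$: the triangle inequality gives
\be
\|u-P^X_{V_n}u\| \leqslant \|u-v_n\| + \|v_n - P^X_{V_n}u\|,
\ee
and the first summand is immediately controlled by $\sqrt M\,\|u-v_n\|_{L^\infty}$ since $\mu$ has total mass $M$.

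For the second summand I would exploit that $v_n - P^X_{V_n}u$ lies in $V_n$ and apply the lower half of the framing \iref{temlyakov framing} to obtain $\|v_n - P^X_{V_n}u\| \leqslant \sqrt\beta\,\|v_n - P^X_{V_n}u\|_X$. Since $P^X_{V_n}u$ is by construction the $X$-orthogonal projection onto $V_n$, Pythagoras with respect to $\langle\cdot,\cdot\rangle_X$ gives $\|v_n - P^X_{V_n}u\|_X \leqslant \|u - v_n\|_X$. A crude pointwise estimate then yields
\be
\|u-v_n\|_X^2 \leqslant \|u-v_n\|_{L^\infty}^2 \cdot \frac{1}{|X|}\sum_{i=1}^m w_i,
\ee
so that everything reduces to controlling the total quadrature mass.

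This is precisely where the hypothesis that the constant functions lie in $V_n$ enters: the sum of weights equals $\|1\|_X^2$, and applying the upper half of \iref{temlyakov framing} to $v=1$ together with $\|1\|^2 = M$ yields $\frac{1}{|X|}\sum_i w_i \leqslant \alpha M$. Chaining the estimates then produces
\be
\|u-P^X_{V_n}u\| \leqslant \sqrt M\,(1+\sqrt{\alpha\beta})\,\|u-v_n\|_{L^\infty},
\ee
and taking the infimum over $v_n \in V_n$ delivers the claim.

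The argument presents no serious obstacle; the only structural point is the last step, where controlling the quadrature mass relies in an essential way on $1 \in V_n$, as otherwise the framing \iref{temlyakov framing} alone gives no handle on the weights and the pointwise conversion from $L^\infty$ to $\|\cdot\|_X$ would fail. The remainder closely mirrors the reasoning of Lemma \ref{main lemma}, with the deterministic $L^\infty$ control replacing the expectation bound \iref{condition on expectancy} used there.
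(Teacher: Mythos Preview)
Your proof is correct and follows essentially the same route as the paper's: triangle inequality, the lower framing applied to $v_n-P^X_{V_n}u\in V_n$, the projection inequality $\|v_n-P^X_{V_n}u\|_X\leqslant\|u-v_n\|_X$, and finally the conversion $\|\cdot\|_X\leqslant\sqrt{\alpha M}\,\|\cdot\|_{L^\infty}$ via the upper framing applied to the constant function~$1$. Your exposition makes the role of the hypothesis $1\in V_n$ more explicit, but the argument is the same.
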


\begin{proof}
For any $v\in L^2$, we have $\|v\|^2\leqslant M\|v\|_{L^\infty}^2$, and as $1\in V_n$,
\be
\|v\|_X^2\leqslant  \|1\|_X^2 \|v\|_{L^\infty}^2\leqslant \alpha\|1\|^2\|v\|_{L^\infty}^2=\alpha M\|v\|_{L^\infty}^2.
\ee
Hence
\begin{align*}
\|u-P^X_{V_n}u\|&\leqslant \|u-v\|+\|v-P^X_{V_n}u\| \\
&\leqslant \|u-v\|+\sqrt\beta\|v-P^X_{V_n}u\|_X\\
&\leqslant \|u-v\|+\sqrt\beta \|v-u\|_X\\
&\leqslant (\sqrt M+\sqrt{\alpha\beta M})\|u-v\|_{L^\infty},
\end{align*}
and we conclude by optimizing over $v\in V_n$.
\end{proof}

Here, in contrast to the derivation of \iref{statement} in Lemma \ref{main lemma},
one only uses the framing property \iref{temlyakov framing}, and does not need the condition
$\mathbb E(\|v\|_X^2)\leqslant \alpha \|v\|^2$. For this reason, one may achieve the above
objective with a simpler sparsification approach proposed in \cite{BSS} and adapted in \cite{LT},
which performs a greedy selection of the points $x^i$ 
within the sample $Y$, together with the definition of weights $w_i$
associated with these points. If the initial sample $Y$ satisfies
\be
\frac 1 2 I\leqslant G_Y \leqslant \frac 3 2 I,
\ee
then, for any $c>1$ the selection algorithm produces a sample 
$X$ of at most $cn$ points such that
\iref{temlyakov framing} holds with $\alpha=\frac 3 2 \left(1+\frac 1 {\sqrt c}\right)^2$ and $\beta^{-1}= \frac 12\left(1-\frac 1 {\sqrt c}\right)^{2}$.

Optimizing the choice of $V_n$ (but imposing that constant functions are contained
in this space), this leads to the following comparison result 
between deterministic optimal recovery numbers in $L^2$ and $n$-widths in $L^\infty$: for any compact set $\mathcal K\in \mathcal C(D)$, one has
\be
\rho_{cn}^{\det}(\mathcal K)_{L^2}\leqslant C\sqrt M d_{n-1}(\mathcal K)_{L^\infty},
\label{temlyakov result}
\ee
where $C$ depends on $c>1$. For $c=2$, one can take $C=11$.
We refer to \cite{LT,T2} where this type of result is established.

Another approach consists in making pointwise evaluations continuous by restriction to the case where $\mathcal K=B_H$ is the unit ball of a reproducing kernel Hilbert space $H\subset L^2$,
and assuming that the sequence $(d_n(B_H)_{L^2})_{n\geqslant1}$ is $\ell^2$-summable. The following result from \cite{NSU}, also based on the sparsification techniques from
\cite{MSS}, improves on a bound found in \cite{KU}
\be
\rho_{Cn}^{\det}(B_H)^2_{L^2}\leqslant K\frac{\log n}{n}\sum_{k\geqslant n}d_k(B_H)_{L^2}^2,
\label{NSU result}
\ee
More general compact
classes $\mathcal K$ of $L^2$, such that point evaluations are well defined on functions of $\mathcal K$,
are considered in \cite{KU2}, where the following general result is established: if
\be
d_n(\mathcal K)_{L^2}^2\leqslant C n^{-\alpha} \ln(n+1)^\beta, \quad n\geqslant 0,
\ee
for some $\alpha>1$ and $\beta\in\mathbb R$, then
\be
\rho_{n}^{\det}(\mathcal K)_{L^2}^2\leqslant \tilde C n^{-\alpha} \ln(n+1)^{\beta+1}, \quad n\geqslant 0.
\label{KU2 result}
\ee
In the above results, the additional logarithmic factor appears as a residual of the result obtained before sparsification, contrarily to the bounds \iref{randomized result} and \iref{temlyakov result}, which do not explicitely depend on the size of the initial sample $Y$. This results in a gap of
a factor $\log n$ between \iref{NSU result}, \iref{KU2 result} and known lower bounds for $\rho_{n}^{\det}(\mathcal K)_{L^2}^2$, see \cite{NSU}.

\section{Computational aspects}

The various results \iref{randomized result}, \iref{temlyakov result}, \iref{NSU result}, \iref{KU2 result}
ensure the existence of good sampling and
reconstructions algorithms in various settings. We end by a discussion on
the computational cost of these strategies. 

For the weighted least-squares methods corresponding to samples $Z$ and $Y$, the most expensive step consists in assembling the matrix $G_Z$ as a sum of $m$ matrices of size $n$, so the algorithmic complexity is of order $\mathcal O(mn^2)=\mathcal O(n^3\log n)$. Besides, to obtain $G_Y$, this step may need to be repeated a few times, as explained in Remark~\ref{remark redraws}, but this only affects the offline complexity by a small random factor.

Note that we assumed that an orthogonal basis $(L_1,\dots,L_n)$ of $V_n$ is explicitly known, which might not be the case for irregular domains $D$. However, under reasonable assumptions on $D$ or $V_n$, one can compute an approximately orthogonal basis $(\widetilde L_1,\dots,\widetilde L_n)$, either by performing a first discretization of $D$ with a large number of points, or by using a hierarchical method on a sequence of nested spaces $V_1\subset\dots\subset V_n$, see \cite{AC, ABC, CD, HNP, Mig2, Mig}. These additional steps have complexities $\mathcal O(K_n n^2)$ and $\mathcal O(n^4)$ respectively, where $K_n$ 
is the maximal value of the inverse Christoffel function $\sum_{j=1}^n |L_j|^2$
which might grow more than linearly with $n$ for certain choices of spaces $V_n$.
Results similar to Lemma \ref{boosted} have been obtained in the above references, with $(L_j)_{j=1,\dots,n}$ replaced by $(\widetilde L_j)_{j=1,\dots,n}$.

One could stop at this point and compute the approximation $\tilde u=P^Y_{V_n}u$, which satisfies error bounds both in expectation when comparing to $e_n(u)$, see Lemma \ref{boosted}, or uniformly when comparing to $e_n(u)_{L^\infty}$, see Theorem 1 (iii) in \cite{CM}. 
 Once the measurements of $u$ are performed, the computation of $\tilde u$ requires to solve a $n\times n$ linear system as in Remark \ref{remgram}, so the online stage takes a time $\mathcal O(\tau n\log n+n^3)$, where $\tau$ is the cost of each measurement of $u$.
 
However, in applications where the evaluation cost $\tau$ 
becomes very high (for example when each evaluation $x\mapsto u(x)$ requires
solving a PDE by some numerical code, or running a physical experiment), 
further reduction of the size of the sample may prove interesting, and justifies the interest for sparsification methods. The greedy selection method from \cite{BSS}, which is used in \cite{T2} and leads to \iref{temlyakov result}, has a complexity in $\mathcal O(mn^3)=\mathcal O(n^4\log n)$, but it can only be applied to the worst-case setting, with the uniform error bound $e_n(u)_{L^\infty}$. 

On the other hand, the iterative splitting method that we have used in this paper
following the ideas from \cite{MSS,NSU} is not easily implemented,
and one obvious method consists in testing all partitions of $\{1,\dots,m\}$ into sets $S_1$ and $S_2$ when applying Lemma \ref{MSS}. Note that this lemma is in practice used $L$ times, with $L=\mathcal O(\log\log n)$ since $2^L=\mathcal O(\frac m n)=\mathcal O(\log n)$. The algorithm consisting in subdividing the sample $L$ times, each time checking that the Gram matrices corresponding to $S_1$ and $S_2$ are well conditioned, and keeping one such subset at random, thus has an exponential complexity $\mathcal O(2^mn^3)=\mathcal O(n^{c n})$. Having
a different strategy that would produce the random sample in polynomial time is
currently an open problem to us. Note that 
the hierarchical Monte-Carlo approaches from \cite{WW,K}
have similar optimal error bounds with an optimal sampling budget, and without 
exponential complexity in the generation of samples, however
under the additional assumption that is described in Remark 4.

We summarise these computational observations in the following table, which illustrates the conflicts between reducing the sampling budget, ensuring optimal approximation results, and maintaining a reasonable cost
for sample generation.
\newline

\noindent
\centerline
{\begin{tabular}{|c|c|c|c|c|}
\hline
\begin{tabular}{@{}c@{}}sampling \\ complexity\end{tabular} &
\begin{tabular}{@{}c@{}}sample \\ cardinality $m$ \end{tabular} &
\begin{tabular}{@{}c@{}}offline \\ complexity \end{tabular} &
\begin{tabular}{@{}c@{}}$\mathbb E(\|u-\tilde u\|^2)$ \\ $\leqslant C e_n(u)^2$ \end{tabular} &
\begin{tabular}{@{}c@{}}$\|u-\tilde u\|^2$ \\ $\leqslant Ce_n(u)_{L^\infty}^2$ \end{tabular} \\
\hline
\begin{tabular}{@{}c@{}}conditioned \\ $\rho^{\otimes m}\,|\,E$\end{tabular} & $10n\log(4n)$ & $\mathcal O(n^3\log n)$ & \cmark & \cmark \\
\hline
\begin{tabular}{@{}c@{}} $+$ deterministic\\ sparsification \cite{BSS} \end{tabular} & $(1+\varepsilon)n$ & $\mathcal O(n^4\log n)$ & \xmark & \cmark \\
\hline
\begin{tabular}{@{}c@{}} $+$ random\\ sparsification \cite{MSS} \end{tabular} & $Cn$& $\mathcal O(n^{c n}) \to \mathcal O(n^r)$ ? & \cmark & \cmark\\
\hline
\end{tabular}
}
\newline
\newline

As a final remark, let us to emphasize that although the results presented in our paper
are mainly theorical and not practically satisfactory, due both to the computational complexity of the sparsification, and to the high values of the numerical constants $C$ and $K$ in Theorem \ref{main theorem}, they provide some intuitive justification to the boosted least-squares methods presented in \cite{HNP}, which consist in removing points from the initial sample as long as the corresponding Gram matrix $G_X$ remains well conditioned. For instance, Lemma \ref{MSS} allows to keep splitting the sample even after $L$ steps, if one still has a framing $\frac{1}{2}I\leqslant G_X\leqslant \frac 3 2 I$ and a sufficiently large ratio $\frac{|X|}{n}$. Nevertheless, it would be of much interest to find a randomized version of \cite{BSS} giving a bound of the form \iref{randomized result}, since this would give algorithmic tractability, smaller values for $C$ and $K$, and the possibility to balance these constants in Theorem \ref{main theorem}.



\begin{thebibliography}{77}

\bibitem{AC} B. Adcock and J.M. Cardenas, 
{\it Near-optimal sampling strategies for multivariate function approximation on general domains}, SIAM Journal on Mathematics of Data Science, 2 (2020), pp. 607-630.

\bibitem{AW} R. Ahlswede and A. Winter, {\it Strong converse for identification via quantum channels}, IEEE Transactions on Information Theory, vol. 48, no 3 (2002), pp. 569-579.

\bibitem{ABC} B. Arras, M. Bachmayr and A. Cohen,  {\it Sequential sampling for optimal weighted least squares
approximations in hierarchical spaces}, SIAM Journal on Mathematics of Data Science, 1 (2019), pp. 189-207.

\bibitem{BSS} J. Batson, D.A. Spielman, and N. Srivastava, {\it Twice-Ramanujan Sparsifiers}, SIAM Review, 56 (2014), pp. 315-334.

\bibitem{CD} A. Cohen and M. Dolbeault, {\it Optimal sampling and Christoffel functions on general domains}, arXiv: 2010.11040 (2020), to appear in Constructive Approximation.

\bibitem{CM} A. Cohen and G. Migliorati, 
\emph{Optimal weighted least squares methods,} SMAI Journal of Computational Mathematics, 3 (2017), pp. 181-203.

\bibitem{DH} A. Doostan and J. Hampton, {\it Coherence motivated sampling and convergence analysis of least squares polynomial Chaos regression}, Computer Methods in Applied Mechanics and Engineering, 290 (2015), pp. 73-97.

\bibitem{HNP} C. Haberstich, A. Nouy, and G. Perrin, 
{\it Boosted optimal weighted least-squares}, arXiv:1912.07075 (2019).

\bibitem{JNZ} J.D. Jakeman, A. Narayan, and T. Zhou, {\it A Christoffel function weighted least squares algorithm for collocation approximations}, Mathematics of Computation, 86 (2017), pp. 1913-1947.

\bibitem{KUV} L. Kaemmerer, T. Ullrich, and T. Volkmer
{\it Worst case recovery guarantees for least squares approximation using random samples}, arXiv: 1911.10111 (2019), to appear in Constructive Approximation.

\bibitem{K} D. Krieg, {\it Optimal Monte Carlo methods for $L^2$-approximation}, Constructive Approximation, 49 (2019), pp. 385-403.

\bibitem{KU} D. Krieg and M. Ullrich,
{\it Function values are enough for $L^2$-approximation}, 
arXiv: 1905.02516 (2020), to appear in Foundations of Computational Mathematics.

\bibitem{KU2} D. Krieg and M. Ullrich,
{\it Function values are enough for $L^2$-approximation: Part II}, 
Journal of Complexity (2021), p. 101569, https://doi.org/10.1016/j.jco.2021.101569.

\bibitem{LT} I. Limonova and V. Temlyakov, {\it On sampling discretization in $L_2$}, arXiv:2009.10789 (2020).

\bibitem{Mig2} G. Migliorati, {\it Adaptive approximation by optimal weighted least-squares methods}, SIAM Journal on Numerical Analysis, 7 (2019), pp. 2217-2245.

\bibitem{Mig} G. Migliorati, {\it Multivariate approximation of functions on irregular domains by weighted least-squares methods}, IMA journal of numerical analysis (2020),
https://doi.org/10.1093/imanum/draa023.

\bibitem{MSS} A. Marcus, D. Spielman and N. Srivastava, {\it Interlacing families II: Mixed characteristic polynomials and the Kadison-Singer problem}, Annals of Mathemathics (2015), pp. 327-350, http://dx.doi.org/10.4007/annals.2015.182.1.8.

\bibitem{NOU2016} S. Nitzan, A. Olevskii and A. Ulanovskii, {\it Exponential frames on unbounded sets}, Proceedings of the American Mathematical Society, vol. 144, no 1 (2016), pp. 109-118, http://dx.doi.org/10.1090/proc/12868.

\bibitem{NSU} N. Nagel, M. Sch\"afer, and T. Ullrich,
{\it A new upper bound for sampling numbers}, arXiv: 2010.00327 (2020), to appear in Foundations of Computational Mathematics.

\bibitem{NW1} E. Novak and H. Wo\'zniakowski, {\it Tractability of multivariate problems. Volume I: Linear information}, 
EMS Tracts in Mathematics, vol. 6, European Mathematical Society (EMS), Z\"urich (2008).

\bibitem{NW3} E. Novak and H. Wo\'zniakowski, {\it Tractability of multivariate problems. Volume III: Standard information for operators},
EMS Tracts in Mathematics, vol. 18, European Mathematical Society (EMS), Z\"urich (2012).

\bibitem{Tr} J. Tropp, {\it User-Friendly tail bounds for sums of random matrices},
Foundations of Computational mathematics, 12 (2012), pp. 389-434.

\bibitem{T2} V. N. Temlyakov, {\it On optimal recovery in $L^2$},  Journal of Complexity, 65 (2020)  p. 101545, https://doi.org/10.1016/j.jco.2020.101545.

\bibitem{WW} G.W. Wasilkowski and H. Wo\'zniakowski, {\it The power of standard information for multivariate approximation in the randomized setting}, Mathematics of Computation, 76 (2007), pp. 965-988.

\end{thebibliography}
\end{document}